\theoremstyle{plain}
\newtheorem{theorem}{Theorem}
\newtheorem{lemma}{Lemma}
\newtheorem{proposition}{Proposition}
\theoremstyle{definition}
\newtheorem{definition}{Definition}
\theoremstyle{plain}
\newtoks\thehProclaim
\newtheorem*{Proclaim}{\the\thehProclaim}
\theoremstyle{definition}
\newtoks{\thehRemark}
\newtheorem*{Remark}{\the\thehRemark}
\newtheorem{rrem}[definition]{Remark} 
\newtheorem{prop}[definition]{Proposition}  
\renewcommand{\leq}{\leqslant}
\renewcommand{\geq}{\geqslant}
\begin{document}

\title{ Ky Fan theorem for sphere bundles}

\author{ Gaiane Panina, Rade \v{Z}ivaljevi\'{c}}

\address{ G. Panina: St. Petersburg Department of Steklov Mathematical Institute; St. Petersburg State University;  gaiane-panina@rambler.ru;  }
\address{R. \v{Z}ivaljevi\'{c}: Mathematical Institute of the Serbian Academy of Sciences and Arts (SASA), Belgrade; rade@turing.mi.sanu.ac.rs }

\subjclass[2000]{}

\keywords{}

\begin{abstract} The classic Ky Fan theorem is a combinatorial equivalent of Borsuk-Ulam theorem. It is a generalization and extension of Tucker's lemma and, just like its predecessor,  it pinpoints important properties of antipodal colorings of vertices of a triangulated sphere  $S^n$.
Here we  describe  generalizations of Ky Fan theorem for the case when the sphere is replaced by  the total space of a triangulated sphere bundle.
\end{abstract}

\maketitle

\section{Introduction}

Combinatorial statements, such as theorems of Carath\' eodory, Radon, Helly, Sperner, Tucker, Ky Fan, etc., are fundamental results of combinatorial (algebraic) topology,
 accessible to non-specialists, which are immediately applicable to mathematical economics, data science, game theory, graph theory, mathematical optimization,
computational geometry, and other fields.

\medskip
 Ky Fan theorem (Theorem \ref{ThmKyFan}) is also a combinatorial counterpart of the Borsuk-Ulam  theorem. It has many different proofs,  based on ideas  from both combinatorics and topology, see \cite{Alishahi-2015}  \cite{Kaiser} \cite{Mark} \cite{Meunier-Su} \cite{Musin2015} \cite{Musin2017} \cite{Rade-Combinatorica}.

\medskip For a general survey and a  discussion of the role of Tucker and Ky Fan theorem the reader is referred to \cite{Bull2019}, see also aforementioned references for  additional information and a guide to the older literature.

\medskip
There is still a relatively small number of pure topological statements  which admit an essentially  combinatorial reformulation or consequence.
Recall that such discrete analogues of theorems of Brouwer and Borsuk-Ulam are respectively Sperner and Tucker lemma \cite{Bull2019} \cite{mat08}. They both have a similar flavor, claiming the existence of particular patterns in special labelings of vertices of triangulated simplices/spheres.

\bigskip
In this paper we use \emph{parameterized Borsuk-Ulam theorem}, as developed in \cite{Jawor}, \cite{Nakaoka}, \cite{Dold}, to obtain qualitative and quantitative results about Tucker-Ky Fan labelings of triangulated sphere bundles.

\medskip
Recall the usual set-up of the classical Ky Fan theorem \cite{Ky Fan}.
The standard unit sphere $S^n \subset \mathbb{R}^{n+1}$ is triangulated and the triangulation is assumed to be centrally symmetric ($\mathbb{Z}_2$-invariant).
There is a labeling (coloring) of vertices of this triangulation
$$
\lambda:  Vert(S^n)\rightarrow \{\pm 1,...,\pm N \}
$$
which is \begin{itemize}
           \item \textit{antipodal},  $\lambda(-v)=-\lambda(v) \ \  \forall v\in Vert(S^n)$, and
           \item $\lambda(v) \neq -\lambda(w)$ for each pair $\{v,w\}$ of adjacent vertices of the triangulation.
         \end{itemize}

\medskip

The \textit{alternating number $Alt(\sigma)$  of a simplex $\sigma$  in }$S^n$  is the number of sign changes
in the labels of its vertices  $\lambda(Vert(\sigma))$, (where the labels are ordered by their absolute values). For example $Alt(-1,2,2,3,-4)=2;\ \ Alt(-1,2,-3,4)=3$, etc.

Clearly, the alternating numbers  of a simplex and its antipodal one are equal.
 The maximal possible alternating number is $n$, and these simplices come in pairs.

The Ky Fan theorem \cite{Ky Fan} states that if such a labeling is given then necessarily $n < N$, and the number of (pairs of)  simplices with alternating number $n$ must be odd:
\begin{theorem}\label{ThmKyFan}
In the above setting,
$$
\sum_{1\leq k_1< k_2...< k_{n+1}\leq N}
\alpha(k_1,-k_2, k_3,-k_4, . . . , (-1)^nk_{n+1}) \equiv 1 \, (mod \ 2),$$
where  $ \alpha(k_1,-k_2, k_3,-k_4, . . . , (-1)^nk_{n+1})$ is the number of $n$-simplices in $S^n$ whose vertices are {(bijectively)} colored by
 $ k_1,-k_2, k_3,-k_4, . . . , (-1)^nk_{n+1}$.
\end{theorem}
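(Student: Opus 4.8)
The plan is to translate the labeling into a $\mathbb{Z}_2$-equivariant simplicial map and read the desired parity off the top mod-$2$ cohomology. Write $Q_N := \mathrm{conv}\{\pm e_1,\dots,\pm e_N\}$ for the cross-polytope, so that $\partial Q_N \cong S^{N-1}$ carries the free antipodal $\mathbb{Z}_2$-action and its faces are the sign-vectors containing no antipodal pair. First I would define $\Lambda : S^n \to \partial Q_N$ on vertices by $\Lambda(v) := \mathrm{sgn}(\lambda(v))\, e_{|\lambda(v)|}$. Antipodality of $\lambda$ makes $\Lambda$ equivariant, and the ``no complementary edge'' hypothesis is exactly what guarantees that no simplex of $S^n$ is sent to a pair $\{e_i,-e_i\}$, so $\Lambda$ is simplicial. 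Passing to quotients gives $\bar\Lambda : \mathbb{RP}^n \to \mathbb{RP}^{N-1}$. Since the double cover of $\mathbb{RP}^n$ is pulled back from that of $\mathbb{RP}^{N-1}$, the map $\bar\Lambda$ carries the generator $w \in H^1(\mathbb{RP}^{N-1};\mathbb{Z}_2)$ to the generator of $H^1(\mathbb{RP}^n;\mathbb{Z}_2)$; by the cup-product structure $\bar\Lambda^*(w^n)$ is the generator of $H^n(\mathbb{RP}^n;\mathbb{Z}_2)$. In particular $n\le N-1$, recovering $n<N$, and dually $\bar\Lambda_*[\mathbb{RP}^n]_2$ is the nonzero class of $H_n(\mathbb{RP}^{N-1};\mathbb{Z}_2)\cong\mathbb{Z}_2$.

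Next I would match the combinatorial sum to a Kronecker pairing. Call an $n$-face of $\partial Q_N$ \emph{alternating} if its vertices are $e_{k_1},-e_{k_2},\dots,(-1)^n e_{k_{n+1}}$ with $k_1<\cdots<k_{n+1}$; modulo the antipodal action these are exactly indexed by the tuples in the theorem. An $n$-simplex $\sigma$ of $S^n$ is mapped bijectively onto an alternating face (not merely into a lower face) precisely when its labels have distinct absolute values alternating in sign, i.e. when $\sigma$ is one of the simplices counted by $\alpha(k_1,-k_2,\dots)$; simplices with a repeated absolute value map to degenerate cells and contribute nothing in degree $n$. Hence, writing $c$ for the mod-$2$ cochain on $\mathbb{RP}^{N-1}$ equal to $1$ on alternating $n$-cells and $0$ elsewhere, the coefficient bookkeeping for $\bar\Lambda_*[\mathbb{RP}^n]_2$ in the simplicial chain complex of $\partial Q_N/\mathbb{Z}_2$ gives
\[
\bigl\langle c,\ \bar\Lambda_*[\mathbb{RP}^n]_2\bigr\rangle \equiv \sum_{k_1<\cdots<k_{n+1}} \alpha\bigl(k_1,-k_2,\dots,(-1)^n k_{n+1}\bigr) \pmod 2 .
\]

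It then remains to prove that $[c]=w^n$, i.e. that the indicator of the alternating $n$-cells is a cocycle representing the top generator of $H^n(\mathbb{RP}^{N-1};\mathbb{Z}_2)$; granting this, the pairing above equals $\langle w^n,\ \bar\Lambda_*[\mathbb{RP}^n]_2\rangle=\langle \bar\Lambda^*w^n,\ [\mathbb{RP}^n]_2\rangle$, the nondegenerate pairing of the two nonzero classes, which equals $1$. I expect this last identification to be the main obstacle: one must check, inside the cross-polytope cell structure, that $c$ is closed and is not a coboundary. The cleanest way I see is to exhibit a $\mathbb{Z}_2$-equivariant cellular collapse of $\partial Q_N$ onto $S^{N-1}$ with its minimal CW structure (one pair of cells per dimension), under which the alternating $n$-cells correspond to the unique $n$-cell and $c$ becomes the evident generating cochain; verifying that this collapse respects the coboundary is the technical heart of the argument.

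Alternatively, one can avoid cohomology and reprove the parity by Ky Fan's original inductive path-following scheme: build a graph on ``almost alternating'' simplices whose vertices have degree at most two and whose path endpoints are the maximal alternating $n$-simplices, thereby reducing the parity in dimension $n$ to an instance in dimension $n-1$, with base case $S^0$. There the delicate point is to anchor the construction (for instance through the label of extremal absolute value) so as to break the antipodal symmetry and produce exactly one unpaired contribution, which is what forces the count to be odd rather than merely even.
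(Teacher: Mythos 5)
Your overall architecture is the standard cohomological proof of Ky Fan's theorem, and it is in fact the blueprint for the machinery this paper develops (the paper itself does not reprove Theorem \ref{ThmKyFan} --- it cites Ky Fan and a list of known proofs --- but Section \ref{SectMainTool} is exactly this argument, set up so that it generalizes to sphere bundles). Your first steps are correct: $\Lambda$ is simplicial and equivariant, $\bar\Lambda^*$ sends the generator $w$ of $H^1(\mathbb{RP}^{N-1};\mathbb{Z}_2)$ to the generator of $H^1(\mathbb{RP}^{n};\mathbb{Z}_2)$, hence $n<N$ and $\langle\bar\Lambda^*w^n,[\mathbb{RP}^n]_2\rangle=1$. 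The genuine gap is exactly where you flag it: you never prove that the indicator cochain $c$ of the alternating $n$-cells is a cocycle representing $w^n$. This is not a routine verification --- it is the entire combinatorial content of the theorem --- and the device you propose for it (a $\mathbb{Z}_2$-equivariant cellular collapse of $\partial Q_N$ onto the minimal CW structure of $S^{N-1}$, carrying $c$ to the generating cochain) is not constructed, and it is not clear it can be arranged so as to control cochain representatives rather than just homotopy type. There is also a small technical wrinkle you should address: $\partial Q_N/\mathbb{Z}_2$ is not a simplicial complex (distinct cells can share all their vertices), so the chain-level bookkeeping must be done with the regular CW structure or equivariantly upstairs. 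Your fallback sketch of Fan's path-following argument likewise stops at the decisive point, namely producing exactly one unpaired endpoint.

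The paper closes this same hole by a different and cleaner device: instead of a cochain representative of $w^n$, it uses a geometric representative of the Poincar\'e dual, namely a generic linear subspace $e_{n-1}=Ker(M_{n-1})\subset\mathbb{R}^N$ of codimension $n$ cut out by a Vandermonde matrix. Lemma \ref{lema:dep} (Cramer-type dependencies), together with the alternating sign pattern of the minimal dependencies of a Vandermonde vector configuration, shows that $e_{n-1}$ meets a face $\sigma$ of $\diamondsuit^N$ if and only if $Alt(\sigma)\geq n$; by genericity it misses all faces of dimension less than $n$ and meets each fully alternating $n$-face in a single interior point. Consequently $\Lambda^{-1}(e_{n-1})$ is a finite set containing exactly one point for each $n$-simplex of $S^n$ counted by the sum in Theorem \ref{ThmKyFan}, and the parity of that count is the mod $2$ intersection number, i.e.\ the pairing $\langle t^n,\cdot\rangle=1$ that you already established. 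If you replace your unproven claim $[c]=w^n$ by this transversality statement (or prove the sign-pattern lemma via oriented matroids, as in \cite{Rade-Combinatorica} and \cite[Section 9.4]{matroids}), your proof becomes complete; as written, it reduces the theorem to an equivalent unproved assertion.
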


In the paper we address the following questions:

\textit{When is it possible to replace the triangulated sphere by some other triangulated manifold  with a free $\mathbb{Z}_2$-action? What happens if one replaces a unique sphere  $S^n$ by a parameterized  continuous family of spheres, that is, by the total space of some spherical  bundle over a smooth manifold?
}

\bigskip

Our main results are:

\begin{itemize}
  \item For a spherical bundle, there are ``many'' simplices  with alternating number $n$; taken together, they form a closed pseudomanifold which is topologically as complicated as the
base of the bundle, see Theorem \ref{Thm_0-eff}.
  \item
For non-trivial bundles one expects simplices with  alternating numbers bigger  than $n$. How much bigger depends on the  Stiefel-Whitney classes of the bundle,   see Theorem \ref{Thm_i-eff}.

\item Many explicit examples are provided in Section \ref{SectEx}. They include spherical bundles associated to the tangent bundles of
selected real and complex projective spaces.
\end{itemize}

\bigskip
\textbf{Acknowledgements. }It is our pleasure to acknowledge the  hospitality of the  Byurakan Astrophysical observatory, where this research was initiated, and also the  hospitality of Mathematisches Forschungsinstitut Oberwolfach, where in the
spring of 2024 this paper was completed as a  part of 'research in pairs' project. 
R. \v Zivaljevi\' c was supported by the Science Fund of the Republic of Serbia, Grant No.\ 7744592, Integrability and Extremal Problems in Mechanics, Geometry and Combinatorics - MEGIC.

\section{Notation and first observations}
Assume we have  a  fiber bundle
$$\pi:E\rightarrow B,$$
 where the base $B$ is a closed compact manifold of dimension $k$, each fiber is the sphere $S^n$,
and the total space $E$ is triangulated.

We assume, as before,  that there is an involution $inv:E\rightarrow E$ ($\mathbb{Z}_2$-action) without fixed points, which is simplicial (sends simplices linearly to simplices)   and keeps each fiber invariant. Assume that there is a labeling (coloring)
$$\lambda:  Vert(E)\rightarrow \{\pm 1,...,\pm N \}$$
which is antipodal with respect to the involution:
$$\lambda(v)=-\lambda(inv(v))\, .$$

We also assume  that  $\lambda(v) \neq -\lambda(w)$ for each pair $\{v,w\}$ of adjacent vertices of the triangulation.

\medskip

\textbf{Some conventions and abbreviations:}
{The triangulation and the coloring of the vertices, satisfying the above conditions, are simply called  \textit{nice colorings}.
We focus on spherical bundles which arise from vector bundles.\footnote{A vector bundle admits a metric and the unit spheres centered at the origins of the fibers constitute a spherical bundle.}   In this case we also speak of  a \textit{ nice coloring of a vector bundle  $E\rightarrow B$}, having in mind a nice coloring of the associated spherical bundle. }

\medskip
Let  $\diamondsuit^N := conv\{\pm e_1, . . . ,\pm e_N\}\subset \mathbb{R}^N$ be the $N$-dimensional \emph{cross-polytope}, whose vertices $\pm e_i$ are (signed versions of) standard basic vectors $e_i$ in $\mathbb{R}^N$. The central symmetry of the cross-polytope leads to a natural  antipodal action of
$\mathbb{Z}_2$ on its boundary $\partial \diamondsuit ^N$. We tacitly assume that the vertices of the cross-polytope are (tautologically) labeled   by  $\{\pm 1,...,\pm N \}$.
The \textit{alternating number $Alt(\sigma)$  of a simplex   in }$\partial \diamondsuit ^N$  is the number of sign changes
in the labels of its vertices (which are ordered by the absolute value of the corresponding labels).

A nice labeling $\lambda$ yields a simplicial antipodal map $$\Lambda: E \rightarrow  \partial \diamondsuit ^N. $$

The  \textit{alternating number}  of a simplex $\sigma$  in $E$  is  the alternating number of  its image $\Lambda(\sigma)$.

Since $\dim E=n+k$, the maximal possible alternating number is $n+k$.

\begin{rrem}
The sphere bundle $E$ is triangulated, however the fibers (homeomorphic to $S^n$) are not necessarily subcomplexes of $E$. As a consequence a labeling of $E$ does not descend
to a labeling of a fiber and a naive application of Ky Fan theorem, which would guarantee the existence of simplices with $Alt\geq n$, is not possible.

However, under a mild assumption that the projection map is  linear on each of the simplices of $E$, the reader can prove the existence of simplices with $Alt\geq n$ for every
nice coloring.\footnote{Assume there are no such simplices. Take any fiber, which is $S^n$. Intersecting $S^n$ with the simplices, we get an antipodal  decomposition of $S^n$.  Triangulate it equivariantly.  Each tile of the decomposition comes from some simplex and has a labeling. Using only these labels, put labels on the vertices of the resulted triangulation  (in an antipodal way). A contradiction with Ky Fan theorem.}
\end{rrem}

In the paper we are going to prove a much stronger result, see Theorem \ref{Thm_0-eff}.

\subsection*{Trivial versus non-trivial bundles. Preliminary examples}

We begin with simple examples showing that trivial and non-trivial bundles  behave quite differently.
\begin{proposition}\label{PropTrivial}
  A trivial $S^n$-bundle over a smooth base has a nice coloring in $2(n+1)$ colors $\{\pm 1,...,\pm (n+1) \}$. Therefore the maximal alternating number for a  coloring of a trivial bundle is $n$.
\end{proposition}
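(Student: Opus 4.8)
The plan is to reduce everything to the tautological labeling of $\partial\diamondsuit^{n+1}$ and to transport it to the total space along the fiberwise projection. First I would write the trivial bundle as a product $E=B\times S^n$ on which the involution acts only on the second factor, $inv(b,x)=(b,-x)$, and identify the fiber $\mathbb{Z}_2$-equivariantly with the boundary of the cross-polytope via the radial homeomorphism $S^n\cong\partial\diamondsuit^{n+1}$. Under this identification the second-coordinate projection $p\colon E\to\partial\diamondsuit^{n+1}$, $p(b,x)=x$, is continuous and $\mathbb{Z}_2$-equivariant, the target carrying its antipodal action. A nice coloring in the colors $\{\pm1,\dots,\pm(n+1)\}$ is precisely the same data as an equivariant simplicial map $\Lambda\colon E\to\partial\diamondsuit^{n+1}$ composed with the tautological labeling $e_i\mapsto i,\ -e_i\mapsto -i$ (here one uses that every two vertices of a simplex are adjacent, so the edge condition already guarantees that the image of any simplex is a genuine face). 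Thus it suffices to realize $p$ by such a map $\Lambda$.

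To produce $\Lambda$ I would fix an equivariant triangulation of $E$ and then replace $p$ by a simplicial approximation while keeping everything $\mathbb{Z}_2$-symmetric. Concretely, since $B$ is smooth it is triangulable, so the quotient $E/\mathbb{Z}_2=B\times\mathbb{RP}^n$ admits a triangulation; pulling it back along the double cover $E\to E/\mathbb{Z}_2$ equips $E$ with a triangulation on which $inv$ acts freely and simplicially (and one may arrange the bundle projection $\pi\colon E\to B$ to be linear on each simplex). Applying ordinary simplicial approximation to the map induced by $p$ on $E/\mathbb{Z}_2$ and lifting yields, after an equivariant subdivision, an equivariant simplicial map $\Lambda\colon E\to\partial\diamondsuit^{n+1}$; let $\lambda$ be the pulled-back labeling. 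Antipodality $\lambda(inv(v))=-\lambda(v)$ is immediate from equivariance of $\Lambda$ together with $a(e_i)=-e_i$. For the adjacency condition, if $v,w$ span an edge then $\{\Lambda(v),\Lambda(w)\}$ is a face of $\partial\diamondsuit^{n+1}$, and since the only pairs of cross-polytope vertices that do \emph{not} span an edge are the antipodal pairs $\{e_i,-e_i\}$, we get $\lambda(v)\neq-\lambda(w)$. Hence $\lambda$ is a nice coloring using only $2(n+1)$ colors.

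For the concluding assertion, every simplex $\sigma$ of $E$ has image $\Lambda(\sigma)$ a face of $\partial\diamondsuit^{n+1}$, whose vertices carry distinct absolute values drawn from $\{1,\dots,n+1\}$; such a face has at most $n+1$ vertices and hence at most $n$ sign changes, so $Alt(\sigma)\leq n$. Combined with the fact that a nice coloring of a sphere bundle always exhibits simplices with $Alt\geq n$ (see the Remark above and Theorem~\ref{Thm_0-eff}), this forces the maximal alternating number to equal $n$. The main obstacle is not the combinatorics but the simultaneous requirement that $\Lambda$ be \emph{both} simplicial \emph{and} $\mathbb{Z}_2$-equivariant: no ordered (staircase) product triangulation of $B\times\partial\diamondsuit^{n+1}$ is invariant under $inv$, since the antipodal map preserves no total order on vertices, which is exactly why I pass to the quotient $B\times\mathbb{RP}^n$ before invoking simplicial approximation and only afterwards lift equivariantly.
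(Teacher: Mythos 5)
Your overall strategy (realize a coloring in $2(n+1)$ colors as an equivariant simplicial map $\Lambda\colon E\to\partial\diamondsuit^{n+1}$ approximating the fiber projection $p$, then pull back the tautological labels) is sound in outline, and your dictionary between nice colorings in the colors $\{\pm 1,\dots,\pm(n+1)\}$ and such maps is correct. But the step you single out as the key maneuver --- ``ordinary simplicial approximation on $E/\mathbb{Z}_2$, then lift'' --- is exactly where there is a genuine gap. For the lifted map to be simplicial into $\partial\diamondsuit^{n+1}$ \emph{with its cross-polytope triangulation} (which is what you need, since only its vertices carry the labels $\pm 1,\dots,\pm(n+1)$), you would need a triangulation of $\mathbb{RP}^n=\partial\diamondsuit^{n+1}/\mathbb{Z}_2$ whose pullback under the double cover is the cross-polytope triangulation. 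No such triangulation exists: the antipodal action on $\partial\diamondsuit^{n+1}$ is free and simplicial but not regular, so the induced decomposition of the quotient is only a $\Delta$-complex, not a simplicial complex. Already for $n=1$ the square $\partial\diamondsuit^{2}$ descends to a bigon (two edges sharing both endpoints), and in general the distinct faces $\{e_1,e_2\}$ and $\{e_1,-e_2\}$ acquire the same vertex set downstairs. If you instead triangulate $\mathbb{RP}^n$ arbitrarily and approximate into that, the lift is simplicial only with respect to the lifted triangulation of $S^n$, whose vertices are not the $\pm e_i$, so no coloring by $\{\pm 1,\dots,\pm(n+1)\}$ can be read off.

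The repair is to approximate \emph{equivariantly upstairs} rather than downstairs: since the $\mathbb{Z}_2$-action on $E$ is free and barycentric subdivision is equivariant, you can run the usual star-condition argument by choosing, for one vertex $v$ in each orbit, a vertex $w$ of $\partial\diamondsuit^{n+1}$ with $p(\mathrm{St}(v))\subseteq \mathrm{St}(w)$, and sending $inv(v)$ to $-w$; equivariance of $p$ guarantees the star condition at $inv(v)$, and the standard argument shows the resulting vertex map is simplicial. (Alternatively: subdivide $\partial\diamondsuit^{n+1}$ twice barycentrically, after which the action is regular and the quotient \emph{is} simplicial, approximate into that, and compose with an equivariant simplicial approximation of the identity $\mathrm{Sd}^{2}\partial\diamondsuit^{n+1}\to\partial\diamondsuit^{n+1}$ sending each barycenter to a vertex of its carrier; this keeps every label among the labels of the carrier, so the adjacency condition survives.) It is also worth comparing with the paper's own proof, which avoids approximation machinery entirely: it takes the product cell decomposition of $B\times\partial\diamondsuit^{n+1}$, colors $(b,v)$ by the label of $v$, and then triangulates each product cell by the barycentric subdivision --- which, unlike the staircase triangulation you rightly reject, \emph{is} symmetric --- coloring each new vertex equivariantly by a color already present on its carrier cell. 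Since no product cell contains two opposite labels, no edge of the resulting triangulation does either. So the obstacle you describe in your last paragraph is real for ordered product triangulations but is circumvented combinatorially, not by passing to the quotient.
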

\begin{proof}
  Let $B\times S^n \rightarrow B$ be a trivial fiber bundle over a smooth manifold $B$. Fix a symmetric triangulation of $S^n$ with an equivariant coloring $\lambda$. For instance, one may take $S^n=\partial \diamondsuit^n$ together with the tautological coloring.
  Clearly, the maximal alternating number  for $(S^n, \lambda )$ is $n$.  Fix a triangulation of $B$. Taken together, the two triangulations give an equivariant regular cell decomposition $T$  of $B\times S^n$, where each cell is a product of two simplices. If we use $\lambda$ to color the vertices of $T$, we observe that no  two vertices of the same cell are colored by two opposite colors $i$ and $-i$. Since each cell is a product of two simplices, it can be triangulated  so that the arising triangulation of $T$ is equivariant. For example, the barycentric triangulation is symmetric.  There appear new vertices; color each of the vertices $v$ equivariantly and using the colors of the face of the cell decomposition containing $v$.  By construction, there are no edges in this triangulation  colored by two opposite colors $i$ and $-i$. Therefore, we obtain a nice coloring with $n+1$ different colors, so the maximal alternating number cannot be bigger than $n$.
\end{proof}

Let $B=S^2$, $n=1$. Let  $E_1$ be  the trivial bundle $S^2\times S^1$, and let $E_2$ be the Hopf bundle over $S^2$.

In both cases we necessarily have simplices  in $E_i$  with $Alt\geq 1.$
For the trivial bundle $E_1$, by Proposition \ref{PropTrivial}, there may not exist simplices with $Alt > 1.$

For the Hopf bundle $E_2$, the total space equals $S^3$, therefore, by Ky Fan theorem applied to $S^3$, there exists  a simplex with $Alt=3$.

\section{The main tool:  line bundle associated to a sphere bundle and powers of its first Stiefel-Whitney class}\label{SectMainTool}

A combinatorially oriented reader, not familiar with basics of the theory  of characteristic classes of vector bundles, may take these results for granted and informally follow their application. For introduction see initial sections of \cite{MilnorStashef} or \cite{Hatcher} or some other textbook on algebraic topology.

\subsection{Line bundle} For each simplicial complex $K$ with a free $\mathbb{Z}_2$-action, there exists an associated line bundle
\[
        \mathbb{R}^1 \longrightarrow K\times_{\mathbb{Z}_2} \mathbb{R}^1 \longrightarrow K/\mathbb{Z}_2
\]
with the corresponding first Stiefel-Whitney class $t := w_1(K)\in H^1(K/\mathbb{Z}_2; \mathbb{Z}_2)$.

\medskip The following result \cite{Husemoller} is of fundamental importance in the theory of characteristic classes of vector and sphere bundles.
Note that some authors use (\ref{eqn:relation}) as a convenient definition of Stiefel-Whitney classes.

\begin{proposition}\label{PropHomolGener} For a sphere fiber bundle $S^{n}\rightarrow E\longrightarrow B$, which arises as an associated bundle $E=S(V)$
of a rank $n+1$  vector bundle $V\longrightarrow B$,
  the cohomology $H^\ast(E/\mathbb{Z}_2)$ is described as a  $H^\ast(B)$-module generated by $1,t,t^2,...,t^n$, subject to the single  relation
\begin{equation}\label{eqn:relation}
  t^{n+1} + w_1t^n + w_2t^{n-1} +\dots + w_{n+1} = 0 \, ,
\end{equation} \noindent where $w_j = w_j(V)\in H^\ast(B)$ are the Stiefel-Whitney classes of the $(n+1)$-dimensional vector bundle $V$.

\medskip\noindent The single generator   $t$  is the Stiefel-Whitney class of the $\mathbb{Z}_2$-space $E = S(V)$.
\end{proposition}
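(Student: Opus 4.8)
The plan is to recognize the quotient $E/\mathbb{Z}_2$ as a familiar object and then invoke two standard tools from the theory of characteristic classes. First I would observe that the fiberwise antipodal involution on the sphere bundle $E=S(V)$ has quotient exactly the real projectivization $P(V)\to B$, since on each fiber $S^n/\mathbb{Z}_2 = \mathbb{RP}^n = P(V_b)$. Moreover the associated line bundle $E\times_{\mathbb{Z}_2}\mathbb{R}^1 \to E/\mathbb{Z}_2$ is canonically the tautological line bundle $\gamma$ over $P(V)$: the fiber over an orbit $\{\pm v\}$ is $\{\pm v\}\times_{\mathbb{Z}_2}\mathbb{R}^1$, which is naturally the line $\mathbb{R}v\subset V_b$. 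Hence $t=w_1(\gamma)$, which simultaneously proves the final sentence of the Proposition that $t$ is the Stiefel-Whitney class of the $\mathbb{Z}_2$-space $E=S(V)$.

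Next, for the module structure I would apply the Leray-Hirsch theorem (with $\mathbb{Z}_2$ coefficients) to the bundle $\mathbb{RP}^n \to P(V)\to B$. The restriction of $\gamma$ to a fiber $P(V_b)=\mathbb{RP}^n$ is the tautological bundle over $\mathbb{RP}^n$, so the restriction of $t=w_1(\gamma)$ generates $H^\ast(\mathbb{RP}^n;\mathbb{Z}_2)=\mathbb{Z}_2[t]/(t^{n+1})$. Consequently the classes $1,t,t^2,\dots,t^n$ restrict on each fiber to an additive basis, and Leray-Hirsch yields that $H^\ast(E/\mathbb{Z}_2)$ is a free $H^\ast(B)$-module on $1,t,\dots,t^n$. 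This already gives the generating set claimed in the statement.

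It remains to establish the defining relation, which I expect to be the main point. Writing $\pi\colon P(V)\to B$ for the projection, the tautological line $\gamma$ sits inside $\pi^\ast V$ as a subbundle, so the tautological inclusion defines a nowhere-vanishing section of the rank $(n+1)$ bundle $\gamma^\ast\otimes\pi^\ast V = \mathrm{Hom}(\gamma,\pi^\ast V)$; therefore its top Stiefel-Whitney class $w_{n+1}(\gamma^\ast\otimes\pi^\ast V)$ vanishes. Computing this class by the splitting principle \cite{MilnorStashef} (over $\mathbb{Z}_2$ one has $w_1(\gamma^\ast)=w_1(\gamma)=t$, so tensoring with $\gamma^\ast$ shifts each Stiefel-Whitney root $x_j$ of $\pi^\ast V$ to $t+x_j$) gives $w_{n+1}(\gamma^\ast\otimes\pi^\ast V)=\prod_{j=1}^{n+1}(t+x_j)=\sum_{i=0}^{n+1} t^{\,n+1-i}\,\pi^\ast w_i(V)$, and setting it equal to zero produces precisely $t^{n+1}+w_1t^n+\dots+w_{n+1}=0$.

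Finally I would close the argument by noting that, since $1,t,\dots,t^n$ form a free $H^\ast(B)$-basis and the displayed relation is monic of degree $n+1$ in $t$, expressing $t^{n+1}$ through lower powers, the evident surjection $H^\ast(B)[T]/(T^{n+1}+w_1T^n+\dots+w_{n+1})\to H^\ast(E/\mathbb{Z}_2)$ is an isomorphism of $H^\ast(B)$-modules (both sides free on $1,t,\dots,t^n$) and of rings. The genuinely nontrivial step is the relation: it rests on the splitting principle and on recognizing the canonical nowhere-zero section, whereas the module structure is a mechanical application of Leray-Hirsch.
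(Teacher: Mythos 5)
Your proposal is correct, but the comparison here is somewhat one-sided: the paper gives no proof of this proposition at all. It treats the statement as a classical result, citing \cite{Husemoller}, and explicitly remarks that some authors take the relation (\ref{eqn:relation}) as the \emph{definition} of the Stiefel-Whitney classes (Grothendieck's approach: one proves via Leray--Hirsch that $H^\ast(P(V);\mathbb{Z}_2)$ is free over $H^\ast(B;\mathbb{Z}_2)$ on $1,t,\dots,t^n$, and then defines the $w_i$ as the coefficients expressing $t^{n+1}$ in that basis). Your argument supplies the content that such a citation hides: you identify $E/\mathbb{Z}_2$ with the projectivization $P(V)$ and the associated line bundle $E\times_{\mathbb{Z}_2}\mathbb{R}^1$ with the tautological bundle $\gamma$ (which is exactly what justifies the paper's final sentence that $t$ is the Stiefel-Whitney class of the $\mathbb{Z}_2$-space $E$), you get the free module structure from Leray--Hirsch, and — the genuinely nontrivial step, as you say — you derive the relation from the nowhere-vanishing tautological section of $\mathrm{Hom}(\gamma,\pi^\ast V)\cong\gamma^\ast\otimes\pi^\ast V$ together with the splitting principle. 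This is the standard way to reconcile the Grothendieck-style definition with the Milnor--Stasheff one (it is essentially the proof in \cite{Hatcher}), and all the ingredients you invoke are sound: $\gamma^\ast\cong\gamma$ for real line bundles, vanishing of the top class of a bundle admitting a nowhere-zero section, and the rank count that upgrades the surjection $H^\ast(B)[T]/(T^{n+1}+w_1T^n+\dots+w_{n+1})\to H^\ast(E/\mathbb{Z}_2)$ to a ring isomorphism. What your route buys, compared with the paper's bare citation, is a self-contained verification that works for whichever definition of Stiefel--Whitney classes the reader carries; what it costs is reliance on two background theorems (Leray--Hirsch and the splitting principle) that are themselves of comparable depth to the cited result.
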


\medskip
{\bf Example.}  If $n=1$ the relation (\ref{eqn:relation}) reduces to
\[
      t^2 = w_1t + w_2 \, .
\]
One can calculate inductively all other powers of $t$:
\[
     t^3 =t^2t = (w_1t + w_2)t = w_1(w_1t + w_2)+ w_2t = (w_1^2+w_2)t+w_1w_2 \,
\]
\[
     t^4 =t^2t^2 = (w_1t + w_2)^2 = w_1^2t^2 + w_2^2 = w_1^3t + w_1^2w_2 + w_2^2 \, , \mbox{ {\rm etc.} }
\]

\medskip
These are very special cases of general results which hold for all higher dimensional vector bundles with compact base.

\begin{proposition}\label{PropCheck}
    {  $$
  t^{n+j+1}=\sum_{i=1}^{n+1}  W_{i,j}(w_1,...,w_{n+1})t^{n-i+1} = W_{1,j}t^n+ W_{2,j}t^{n-1}+\dots + W_{n+1,j}
  $$ }
 where $W_{i,j}$ are polynomials in  Stiefel-Whitney classes. One can compute them inductively starting with $W_{i,0}=w_{i}$
  and proceeding by recursion
  {
  \begin{equation}\label{eq:recurrence} W_{i,j+1} = W_{i+1,j} +W_{1,j}w_{i} \end{equation}}
where $i=1,\dots, n+1$,  $j\geq 0$ and $W_{n+2,k}:= 0$ for each $k$.
\end{proposition}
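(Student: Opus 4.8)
The plan is to establish both the existence of the expansion and the recurrence for its coefficients by a single induction on $j\geq 0$, using as the only ingredient the structural relation (\ref{eqn:relation}) of Proposition \ref{PropHomolGener}. Since all cohomology is taken with $\mathbb{Z}_2$-coefficients, the signs in (\ref{eqn:relation}) are irrelevant and the relation rewrites as
\[
t^{n+1} = w_1t^n + w_2t^{n-1} + \dots + w_{n+1} = \sum_{i=1}^{n+1} w_i\, t^{n+1-i}.
\]
This is exactly the asserted expansion for $j=0$, and it reads off the base case $W_{i,0}=w_i$.

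For the inductive step I would assume the expansion $t^{n+j+1} = \sum_{i=1}^{n+1} W_{i,j}\, t^{n+1-i}$ and multiply through by $t$, obtaining $t^{n+j+2} = \sum_{i=1}^{n+1} W_{i,j}\, t^{n+2-i}$. Only the $i=1$ summand $W_{1,j}t^{n+1}$ leaves the admissible range $\{t^0,\dots,t^n\}$; substituting the base relation for this single occurrence of $t^{n+1}$ reduces everything back into the span of $1,t,\dots,t^n$. It then remains to collect, for each fixed power $t^{n+1-i}$, its total coefficient.

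The bookkeeping has two sources. The summands with $i\geq 2$ in $\sum_{i=1}^{n+1} W_{i,j} t^{n+2-i}$ contribute, after the shift $i\mapsto i-1$, the term $W_{i+1,j}$ to the coefficient of $t^{n+1-i}$ for $i=1,\dots,n$; the substituted term $W_{1,j}\sum_{\ell=1}^{n+1} w_\ell\, t^{n+1-\ell}$ contributes $W_{1,j}w_i$ to the coefficient of $t^{n+1-i}$ for every $i=1,\dots,n+1$. Summing these gives $W_{i,j+1} = W_{i+1,j} + W_{1,j}w_i$, which is precisely (\ref{eq:recurrence}); the convention $W_{n+2,j}:=0$ is exactly what absorbs the missing first contribution at the top index $i=n+1$. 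The only point where care is genuinely required is this index matching together with the boundary case $i=n+1$, and the $\mathbb{Z}_2$-coefficients make any sign concerns evaporate, so I expect no substantive obstacle beyond clean bookkeeping.
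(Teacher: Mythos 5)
Your proof is correct: the base case reads off relation (\ref{eqn:relation}) over $\mathbb{Z}_2$, and multiplying by $t$, substituting for the single overflow term $W_{1,j}t^{n+1}$, and matching coefficients (with $W_{n+2,j}:=0$ absorbing the boundary case $i=n+1$) gives exactly the recurrence (\ref{eq:recurrence}). The paper in fact states Proposition \ref{PropCheck} without proof, treating it as a routine consequence of Proposition \ref{PropHomolGener}; your induction is precisely that intended argument, so there is nothing to add.
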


{
The recurrence (\ref{eq:recurrence}) is a useful tool for concrete calculation. However, the following formula of Conner, see \cite[Section 2]{Con63} and  \cite[Section 3]{Con67}, is sometimes more convenient.
It  provides an explicit description of
$W_{i,j}$ in terms of the Stiefel-Whitney classes $w_i$ and the corresponding  dual classes $\bar{w}_i$.

\begin{proposition}\label{prop:Conn63}
The polynomial $W_{i,j}$ is homogenous of dimension $i+j$. It can be expressed as a truncated convolution of the total Stiefel-Whitney class $w = 1 + w_1+w_2 + \dots $ and the total dual class $\bar{w}=\frac{1}{w} = 1 + \bar{w}_1+\bar{w}_2 + \dots $ as follows
 \begin{equation}\label{eq:formula}
       W_{i,j}=  \sum_{k=0}^{j} \bar{w}_k w_{i+j-k}
\end{equation}
\end{proposition}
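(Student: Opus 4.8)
The plan is to prove the closed formula (\ref{eq:formula}) by induction on $j$, by checking that its right-hand side, which I will call $\tilde{W}_{i,j} := \sum_{k=0}^{j}\bar{w}_k w_{i+j-k}$, satisfies the very same initial condition and recurrence that define $W_{i,j}$ in Proposition \ref{PropCheck}. Since the data $W_{i,0}=w_i$ together with the recurrence (\ref{eq:recurrence}) determine the whole family $\{W_{i,j}\}$ uniquely, this identifies $\tilde{W}_{i,j}$ with $W_{i,j}$. The homogeneity statement comes for free and I would record it first: each summand $\bar{w}_k w_{i+j-k}$ has dimension $k+(i+j-k)=i+j$, so $\tilde{W}_{i,j}$ is homogeneous of dimension $i+j$.

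For the base case $j=0$ one has $\tilde{W}_{i,0}=\bar{w}_0 w_i=w_i$ because $\bar{w}_0=1$, matching $W_{i,0}=w_i$. For the inductive step, assuming (\ref{eq:formula}) holds for a fixed $j$ and all $i$, I would expand the right-hand side of the recurrence,
\[
\tilde{W}_{i+1,j}+\tilde{W}_{1,j}\,w_i=\sum_{k=0}^{j}\bar{w}_k w_{i+j+1-k}\;+\;w_i\sum_{k=0}^{j}\bar{w}_k w_{j+1-k},
\]
and compare it with the target $\tilde{W}_{i,j+1}=\sum_{k=0}^{j+1}\bar{w}_k w_{i+j+1-k}$. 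The first sum is precisely $\tilde{W}_{i,j+1}$ with its top term $\bar{w}_{j+1}w_i$ (the $k=j+1$ summand) deleted, so the entire recurrence reduces to the single scalar identity $\sum_{k=0}^{j}\bar{w}_k w_{j+1-k}=\bar{w}_{j+1}$.

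The one substantive ingredient is this last identity, and it is nothing but the defining relation of the dual class. Since $w\,\bar{w}=1$, the degree-$(j+1)$ part of the product vanishes for $j+1\geq 1$, that is $\sum_{k=0}^{j+1}\bar{w}_k w_{j+1-k}=0$; isolating the $k=j+1$ summand, which equals $\bar{w}_{j+1}$, yields exactly $\sum_{k=0}^{j}\bar{w}_k w_{j+1-k}=\bar{w}_{j+1}$. Substituting this back, the two copies of $w_i\bar{w}_{j+1}$ cancel and the recurrence (\ref{eq:recurrence}) is confirmed, closing the induction. I do not anticipate a genuine obstacle here; the only point I would double-check is compatibility with the boundary convention $W_{n+2,k}=0$, but this holds automatically, since $V$ has rank $n+1$ forces $w_m=0$ for $m>n+1$ and hence every term of $\tilde{W}_{n+2,j}$ vanishes. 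A generating-function derivation, organizing the reductions of the powers $t^{n+j+1}$ through the series $\bar{w}=1/w$, gives the same answer, but the inductive check above is the most self-contained route given Proposition \ref{PropCheck}.
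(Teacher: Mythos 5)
Your proof is correct. Note, however, that the paper itself offers no proof of this proposition at all: it is quoted as a known formula of Conner, with pointers to \cite{Con63} and \cite{Con67}. So your argument is not a variant of the paper's proof but a genuinely self-contained replacement for the external citation. Your route --- verifying that $\tilde{W}_{i,j}=\sum_{k=0}^{j}\bar{w}_k w_{i+j-k}$ satisfies the initial condition $\tilde{W}_{i,0}=w_i$ and the recurrence (\ref{eq:recurrence}), and invoking the uniqueness of the family determined by these data --- is exactly the right mechanism, since the paper's Proposition \ref{PropCheck} defines the $W_{i,j}$ by that recursion. The key identity $\sum_{k=0}^{j}\bar{w}_k w_{j+1-k}=\bar{w}_{j+1}$ is indeed just the degree-$(j+1)$ component of $w\bar{w}=1$ (and over $\mathbb{Z}_2$ the sign issue you implicitly wave away is genuinely absent), and your check that the boundary convention $W_{n+2,k}=0$ is consistent, because $w_m=0$ for $m>n+1$ when $V$ has rank $n+1$, closes the one loose end where the induction touches $i+1=n+2$. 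What your approach buys is a short, elementary, purely formal verification accessible to the paper's ``combinatorially oriented reader''; what the citation to Conner buys is the original context (bordism-theoretic) and the interpretation of the $W_{i,j}$ there, which your argument does not reproduce but also does not need.
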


\subsection{Application}
We apply the theory outlined above to a triangulated spherical bundle with a nice coloring.
 Consider the line bundle
 \begin{equation}\label{eq:bundle1} E\times_{\mathbb{Z}_2}\mathbb{R}^1 \longrightarrow E/\mathbb{Z}_2.\end{equation}
  whose first  Stiefel-Whitney class we denote by $t_E$. It is the pullback $\Lambda^*$ of the bundle
\begin{equation}\label{eq:bundle} \partial\diamondsuit ^N\times_{\mathbb{Z}_2}\mathbb{R}^1 \longrightarrow \partial\diamondsuit ^N/\mathbb{Z}_2,\end{equation}
where $\Lambda$ is the simplicial map arising from the labeling. If $t_{\diamondsuit}$ is the first  Stiefel-Whitney class of the bundle (\ref{eq:bundle}), one concludes that for all $j$  $$t_E^j=\Lambda^*(t_{\diamondsuit}^j)  \, .$$

We pass to the Poincare duals of the classes $t_E^j$ and $t_{\diamondsuit}^j$, keeping in mind that in the dual language taking the pullback amounts essentially  to taking the preimage $\Lambda^{-1}$ of the dual homology class of $t_{\diamondsuit}^j$.
We make use of a very special representative of  the (dual of) class $t_{\diamondsuit}^j$.  The complex $\partial\diamondsuit ^N/\mathbb{Z}_2$  is the real projective space $\mathbb{R}P^{N-1}$ doubly covered by $S^{N-1},$ so the Poincare dual to $t_{\diamondsuit}$ is represented by (any) projective hyperplane. Therefore, the $j$-th power $t_{\diamondsuit}^j$ is represented by (any) projective  plane of codimension $j$.

\medskip
For our purposes we choose projective planes coming from  Euclidean  spaces  $$\mathbb{R}^N \supset e_0 \supset e_1 \supset e_2 \supset  \dots  $$ where
\begin{enumerate}
  \item { $codim\  e_j= j+1$.}
  \item $e_j$ is chosen to be in generic position w.r.t.\  all the faces of $\diamondsuit ^N$. That is the intersection of $e_j$ and $\sigma$ is empty, if $\dim (e_j)+ \dim (\sigma)< N$, or
  $$dim(e_j\cap \sigma)=  dim\ \sigma -j-1.$$
  \item $e_j$ intersects a face $\sigma$  of $\diamondsuit ^N$ iff  { $Alt(\sigma)\geq 1+j$.}
\end{enumerate}

\medskip {
The existence of such planes $e_0 \supset e_1 \supset e_2 \supset \dots$ is not difficult to establish.
For example $e_j$  can be defined as the kernel $Ker(M_j)$ of the linear map with the matrix
\[ M_j =
\begin{bmatrix}
    1 & 1 & 1 & \dots  & 1 \\
    x_1 & x_2 & x_3 & \dots  & x_N \\
    \vdots & \vdots & \vdots & \ddots & \vdots \\
    x_1^j & x_2^j & x_3^j & \dots  & x_N^j
\end{bmatrix}
\]
where $0<x_1< x_2 < \dots < x_N$   are any positive numbers. Indeed, let $v_i \, (i=1,\dots, N)$ be the column vectors of $M_j$.
Then $\lambda = (\lambda_i)\in Ker(M_j)\subset \mathbb{R}^N$ if and only if
$\sum_{i} \lambda_iv_i = 0$. (Such a vector $\lambda$ is called a \emph{dependency} of the collection $V = \{v_i\}$.)

\medskip
The set $I := \{i \mid \lambda_i\neq 0\}$ is called the support $supp(\lambda)$ of $\lambda$ and $sgn(\lambda) = \sum_{i} \lambda_ie_i$, where $\epsilon_i := sgn(\lambda_i)\in \{-1,0,+1\}$, is the corresponding \emph{sign pattern}.

\medskip
Let $I \subseteq [N]$ and let $\sigma = conv\{\eta_ie_i\}_{i\in I}$ be the corresponding face of $\diamondsuit ^N$, where $\eta_i\in \{-1,+1\}$. Let $\eta_j = 0$ for $j\notin I$. The vector
$sgn(\sigma):=\eta = (\eta_i)\in \mathbb{R}^N$ is called the \emph{sign pattern} of $\sigma$.

Then
\[
  e_j \cap {\stackrel \circ\sigma} = Ker(M_j) \cap {\stackrel \circ\sigma} \neq \emptyset
\]
where ${\stackrel \circ\sigma}$ is the relative interior of $\sigma$, if and only if there is a dependency $\lambda$ such that $sgn(\lambda) = sgn(\sigma)$.
This observation allows us to decide which faces of $\diamondsuit ^N$ are intersected by $e_j = Ker(M_j)$ simply by looking at the sign patterns of vectors in $e_j$.

\medskip
The following lemma is very useful for this purpose.
\begin{lemma}\label{lema:dep}
  Let $\{a_i\}_{i=1}^{d+1}$ be a collection of column vectors in $\mathbb{R}^d$ and let $A = [a_1, \dots, a_{d+1}]$ be the corresponding $d\times (d+1)$ matrix. Then
  \[
  \sum_{i=1}^{d+1} (-1)^i \det(A_i)a_i = 0
  \]
where $A_i$ is the square matrix obtained by removing the column $a_i$ from $A$.
\end{lemma}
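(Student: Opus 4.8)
The plan is to prove the vector identity coordinate by coordinate, reducing it to the vanishing of a determinant with a repeated row. Fix a coordinate index $k\in\{1,\dots,d\}$ and write $(a_i)_k$ for the $k$-th entry of the column $a_i$. The $k$-th coordinate of the left-hand side is the scalar $\sum_{i=1}^{d+1}(-1)^i (a_i)_k \det(A_i)$, so it suffices to show that this scalar vanishes for every choice of $k$.

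First I would form the $(d+1)\times(d+1)$ matrix $B$ obtained from $A$ by prepending, as a new top row, a copy of the $k$-th row of $A$. Under this shift the original $k$-th row of $A$ becomes the $(k+1)$-st row of $B$, so the new first row of $B$ is identical to its $(k+1)$-st row. Having two equal rows, the matrix $B$ is singular, and hence $\det(B)=0$.

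Next I would compute $\det(B)$ by Laplace (cofactor) expansion along the top row. The top-row entries are $(a_1)_k,\dots,(a_{d+1})_k$, and deleting the top row together with the $i$-th column of $B$ leaves precisely the matrix $A_i$; therefore the cofactor attached to the $i$-th top-row entry is $(-1)^{1+i}\det(A_i)$. The expansion then reads $\det(B)=\sum_{i=1}^{d+1}(-1)^{1+i}(a_i)_k\det(A_i)$. Combining this with $\det(B)=0$ and absorbing the overall sign gives $\sum_{i=1}^{d+1}(-1)^i (a_i)_k\det(A_i)=0$. Since $k\in\{1,\dots,d\}$ was arbitrary, every coordinate of $\sum_{i=1}^{d+1}(-1)^i\det(A_i)a_i$ vanishes, which is exactly the claimed identity.

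The argument presents no genuine obstacle: it is a standard consequence of cofactor expansion together with the vanishing of a determinant with a repeated row. The only point requiring mild care is the bookkeeping of signs, where the $(-1)^{1+i}$ produced by expanding along the prepended row differs from the $(-1)^i$ in the statement by a global factor of $-1$, which is harmless since the relation is homogeneous.
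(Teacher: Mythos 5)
Your proof is correct and complete: prepending the $k$-th row of $A$ as a new top row produces a $(d+1)\times(d+1)$ matrix with a repeated row, whose vanishing determinant, expanded along that top row, yields exactly the $k$-th coordinate of the claimed identity, and the sign bookkeeping is handled properly. Note that the paper itself gives no proof of this lemma --- it is stated as a standard fact, with a pointer to the oriented-matroid literature \cite[Section 9.4]{matroids} --- so your argument supplies the classical justification (the same repeated-row/cofactor-expansion reasoning that underlies Cramer's rule) rather than diverging from anything in the paper.
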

By applying Lemma \ref{lema:dep} to the matrix $M_j$ (and its square submatrices) we are able to recover all dependences of the associated vector configuration.
In particular, we observe the characteristic alternating sign pattern for the dependences with the minimal support.

\medskip
The reader is referred to \cite[Section 9.4]{matroids} for more details about this construction, expressed in the language of the theory of oriented matroids.

\section{Generalizations of the Ky Fan theorem}

\subsection{Cycles $Z_{i}(E,\lambda)$   representing the powers $t_E^{n+i}$  of the Stiefel-Whitney class } For each $i=0,1,2,...,k-1$, where $k$ is the dimension of the base $B$,
the set
$$Z_i =\Lambda^{-1}(e_{n+i-1})$$ represents the (the Poincare dual of)  $t_E^{n+i}$.


\medskip

For a simplex $\sigma \in E$,  the set $Z_i(\sigma):=Z_i\cap \sigma$  is some (possibly empty) convex polytope. Indeed, the map $\Lambda$ is linear on $\sigma$. So {$Z_i(\sigma)=\Lambda ^{-1}(e_{n+i-1})\cap\sigma$ } is either some inner point of $\sigma$, or can be computed iteratively as the convex hull of $Z_i(\tau_j)$,
where $\tau_j$ ranges over the facets  of $\sigma$.

By construction, $Z_i$ intersects a simplex $\sigma \in E$ iff $Alt(\sigma )\geq n+i$.
In particular,  $Z_0$ captures the simplices with $Alt \geq n$.

\begin{definition}
 A simplex $\sigma$ in $E$ is \textit{ $i$-effective} if $Alt(\sigma)\geq n+i$.
\end{definition}
We are interested in $dim (Z_i(\sigma))$.
\begin{lemma} For a simplex $\sigma \in E$, we have
\begin{enumerate}
 \item $dim \ Z_i(\sigma)=dim (\sigma)-n-i$, if the simplex is $i$-effective.
  \item $Z_i(\sigma)=\emptyset$ otherwise.
\end{enumerate}
\end{lemma}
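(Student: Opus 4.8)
The plan is to reduce the whole statement to a study of the restriction of $\Lambda$ to the single simplex $\sigma$. Since $\Lambda$ is affine on $\sigma$, its image $\tau := \Lambda(\sigma)$ is a face of $\diamondsuit^N$ whose alternating number equals $Alt(\sigma)$ by definition, and $Z_i(\sigma) = \sigma\cap\Lambda^{-1}(e_{n+i-1}) = (\Lambda|_\sigma)^{-1}(e_{n+i-1})$. Writing $f := \Lambda|_\sigma\colon \sigma\to\tau$, this is an affine surjection of a simplex onto the polytope $\tau$, and the claim becomes a statement about $f^{-1}(e_{n+i-1})$. Throughout I use that the relevant plane is $e_j$ with $j=n+i-1$, so that $\operatorname{codim} e_j = n+i$, and conditions (2),(3) on the planes $e_j$ read: $e_j$ meets a face $\tau$ of $\diamondsuit^N$ iff $Alt(\tau)\geq n+i$, and in that case $\dim(e_j\cap\tau)=\dim\tau-(n+i)$.

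For part (2) I would argue directly: if $\sigma$ is not $i$-effective, then $Alt(\tau)=Alt(\sigma)<n+i$, so condition (3) gives $e_{n+i-1}\cap\tau=\emptyset$; since $f(\sigma)=\tau$, this forces $Z_i(\sigma)=f^{-1}(e_{n+i-1})=\emptyset$.

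For part (1) I would assume $\sigma$ is $i$-effective, i.e.\ $Alt(\tau)\geq n+i$. By condition (3) the set $P := e_{n+i-1}\cap\tau$ is nonempty, and by condition (2) $\dim P=\dim\tau-(n+i)\geq 0$, where $\dim\tau\geq Alt(\tau)\geq n+i$ because the number of sign changes in an ordered list of $\dim\tau+1$ labels is at most $\dim\tau$. Set $Q := e_{n+i-1}\cap\operatorname{aff}(\tau)$, an affine subspace of $\operatorname{aff}(\tau)$ of codimension $n+i$ with $P=Q\cap\tau$. I would then check that $P$ meets the relative interior $\operatorname{relint}\tau$: if $P\subseteq\partial\tau$, then $P$ would be covered by the sets $e_{n+i-1}\cap\tau'$ over the facets $\tau'$ of $\tau$, each of dimension at most $(\dim\tau-1)-(n+i)<\dim P$, which is impossible for a convex polytope of dimension $\dim P$. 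Finally I extend $f$ affinely to $\operatorname{aff}(\sigma)$, where it surjects onto $\operatorname{aff}(\tau)$; since $f(x)\in e_{n+i-1}$ iff $f(x)\in Q$, we have $Z_i(\sigma)=\sigma\cap f^{-1}(Q)$. The preimage of a codimension-$(n+i)$ affine subspace under a surjective affine map again has codimension $n+i$, so $f^{-1}(Q)$ is an affine subspace of $\operatorname{aff}(\sigma)$ of dimension $\dim\sigma-(n+i)$; and because surjective affine maps carry relative interiors onto relative interiors, a point of $P\cap\operatorname{relint}\tau$ pulls back to a point of $\operatorname{relint}\sigma$, so $f^{-1}(Q)$ meets $\operatorname{relint}\sigma$. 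Hence the convex polytope $\sigma\cap f^{-1}(Q)$ attains the full dimension of $f^{-1}(Q)$, giving $\dim Z_i(\sigma)=\dim\sigma-n-i$.

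I expect the main obstacle to be that $\Lambda$ need not be injective on $\sigma$: a nice coloring forbids opposite labels on adjacent vertices but permits equal labels, so several vertices of $\sigma$ may share a color and $\dim\tau$ may be strictly smaller than $\dim\sigma$. A naive codimension count inside the image $\tau$ yields only $\dim P=\dim\tau-(n+i)$ and misses the contribution of the $(\dim\sigma-\dim\tau)$-dimensional fibers of $f$. The device that resolves this is to count codimension upstairs, in $\operatorname{aff}(\sigma)$, exploiting that preimages under a surjective affine map preserve codimension; the only genuinely geometric input then is the relative-interior claim, which is precisely where the genericity of the planes $e_j$ (condition (2)) guarantees that $\sigma\cap f^{-1}(Q)$ realizes the full dimension of $f^{-1}(Q)$ rather than a lower-dimensional slice of it.
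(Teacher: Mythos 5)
Your proposal is correct and takes essentially the same route as the paper: both reduce to the image face $\tau=\Lambda(\sigma)$ in the cross-polytope, invoke the genericity (transversality) of the planes $e_j$, and handle the possible non-injectivity of $\Lambda|_\sigma$ by a fiber/codimension count --- the paper factors $\Lambda|_\sigma$ through a face $\sigma'$ on which it is injective and adds the fiber dimension, while you pull the plane back through the affine surjection directly. The one substantive refinement in your version is the explicit relative-interior argument, which justifies a point the paper leaves implicit: the formula $\dim Z_i(\sigma)=\dim Z_i(\sigma')+\dim\sigma-\dim\sigma'$ (equivalently, that $\sigma\cap f^{-1}(Q)$ attains the full dimension of $f^{-1}(Q)$) requires that the intersection with the generic plane meet the relative interior of the relevant face, which is exactly what your covering-by-facets argument supplies.
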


\begin{proof} We need to prove only (1).
It follows from transversality of the planes $e_i$ with respect to the faces of the cross polytope:

If $\sigma$ is an effective simplex which maps injectively by $\Lambda$ to some face of the cross polytope, $dim (Z_i(\sigma))= dim(\Lambda (\sigma)\cap e_{n+i-1})=dim (\sigma)-n-i$.

Otherwise $\Lambda$  factors through a projection map $$p:\sigma \rightarrow  \sigma'$$ to its $i$-effective face  for which $\Lambda$ is injective.

$$dim (Z_i(\sigma))= dim (Z_i(\sigma'))+ dim (\sigma)-dim (\sigma')= $$ $$(dim (Z_i(\sigma'))-dim (\sigma'))+ dim (\sigma)= dim (\sigma)-n-i
.$$

\end{proof}

\subsection{How many  $0$-effective simplices are there?}

Ky Fan theorem states that for every nice coloring there exist $0$-effective simplices in $E$, that is simplices with $Alt\geq n$.
 The advantage of using a sphere bundle $\pi: E \longrightarrow B$ (instead of a single sphere) is that we can provide more information about the
``size'' or complexity of the set of $0$-effective simplices.

\medskip
Let $pr_{n+i-1}:\mathbb{R}^N \rightarrow e_{n+i-1}^{\bot}$  be the  projection   to the orthogonal complement of $e_{n+i-1}$. Its kernel equals $e_{n+i-1}$.  Then  $Z_i  =  \Lambda ^{-1}(e_{n+i-1})= \Lambda ^{-1}\circ pr_{n+i-1}^{-1}(0)  \subset E.$   So $Z_i$ is \textit{the zero set }of the composition $ pr_{n+i-1}\circ \Lambda $.

\begin{theorem}\label{Thm_0-eff}  \begin{enumerate} Let $\Lambda: E \rightarrow  \partial \diamondsuit ^N$ be a nice coloring of an $n$-dimensional sphere bundle $\pi: E \rightarrow  B$. Then
                \item  $Z_0$ is a pure regular cell complex, a pseudomanifold of dimension $k$. Its cells are in bijection with $i$-effective simplices.
                \item The induced map
\[
    \pi^\ast : H^\ast(B) \longrightarrow H^{\ast}(Z_0)
\]
is a monomorphism.
                \item  The pushforward \newline $\pi_*: H_i(Z_0)\rightarrow H_i(B)$   is surjective.
                    \item The number of $n$-dimensional $0$-effective simplices is at least $k+1$.
 \end{enumerate}
\end{theorem}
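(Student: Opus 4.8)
\section*{Proof proposal for Theorem \ref{Thm_0-eff}(4)}

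The plan is to deduce the numerical bound directly from the cell structure of $Z_0$ recorded in part (1), reducing the whole statement to the elementary fact that a $k$-dimensional convex polytope has at least $k+1$ vertices. Only part (1) is needed; the homological statements (2) and (3) are not used.

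First I would set up the dictionary between cells of $Z_0$ and simplices of $E$. By part (1), $Z_0$ is a pure regular cell complex of dimension $k$ whose cells are exactly the nonempty pieces $Z_0(\sigma) = Z_0\cap\sigma$, one for each $0$-effective simplex $\sigma$ of $E$, and by the preceding lemma $\dim Z_0(\sigma) = \dim\sigma - n$. Hence the $0$-cells (vertices) of $Z_0$ are precisely the points $Z_0(\tau)$ attached to the $n$-dimensional $0$-effective simplices $\tau$; note that an $n$-simplex carries at most $n$ sign changes, so $0$-effectiveness forces $Alt(\tau)=n$ exactly. Thus the number we must bound below, the number of $n$-dimensional $0$-effective simplices, is exactly the number $f_0(Z_0)$ of vertices of the complex $Z_0$.

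Next I would exhibit a single top cell. Since $Z_0$ is pure of dimension $k$ (a $k$-pseudomanifold), it carries at least one $k$-cell $C = Z_0(\sigma)$, arising from a top-dimensional simplex $\sigma$ with $\dim\sigma = n+k$. As observed just before the lemma, $C = \sigma\cap\Lambda^{-1}(e_{n-1})$ is the intersection of the simplex $\sigma$ with the $\Lambda$-preimage of the linear subspace $e_{n-1}$, on which $\Lambda$ is linear; it is therefore a convex polytope, and by the lemma it is genuinely $k$-dimensional. Any $k$-dimensional convex polytope affinely spans a $k$-dimensional space and so has at least $k+1$ vertices, whence $C$ has at least $k+1$ vertices.

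Finally I would transfer this count back to $Z_0$. Because $Z_0$ is a \emph{regular} cell complex, each vertex of the polytope $C$ is a $0$-face of $C$ and hence a genuine $0$-cell of $Z_0$, i.e.\ it equals $Z_0(\tau)$ for a unique $n$-dimensional $0$-effective face $\tau$ of $\sigma$; distinct vertices are distinct points of $E$ and therefore pick out distinct simplices $\tau$. Consequently $f_0(Z_0)\geq k+1$, which is the asserted bound. The one point requiring care, and the main (if modest) obstacle, is exactly this last transfer: one must invoke the regularity of the cell structure from part (1) to guarantee that the combinatorial vertices of the single top polytope $C$ really are $0$-cells of $Z_0$ attached to honest $n$-dimensional $0$-effective simplices, rather than being identified with one another or sitting on higher-dimensional faces.
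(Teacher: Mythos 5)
Your proposal is correct and follows essentially the same route as the paper, whose (much terser) proof of (4) likewise identifies $n$-dimensional $0$-effective simplices with the vertices of $Z_0$ and then argues that a complex of dimension $k$ must have at least $k+1$ vertices. Your write-up merely fills in the detail the paper leaves implicit --- exhibiting a single top cell as a $k$-dimensional convex polytope and using regularity of the cell structure from part (1) to transfer its $k+1$ vertices to genuine $0$-cells of $Z_0$ --- which is a faithful elaboration rather than a different argument.
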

\medskip
                Informally, (2) means that not only there are  many $0$-effective simplices, but also they   form
                something  which is topologically at least as complicated as the base is.

              \begin{proof}
               (1)    Maximal dimension of $Z_0(\sigma)$ is $k$, and it is attained on (some of) maximal simplices of $E$.
                Now take a $(k-1)$-effective simplex. It is a simplex of $E$ of codimension $1$, therefore, has exactly two
                adjacent maximal simplices in $E$. Both of them are $k$-effective.

 (2) follows  from \cite[Theorem 1.3]{Dold} (Corollary 1.5) which says that, cohomologically, the zero set $Z_0(E, \Lambda)$ is at least as complex as the base space $B$.

             (3) follows from (2).

             (4) follows also from (3). Indeed, $n$-dimensional $0$-effective simplices correspond to the vertices of $Z_0$. Since $dim Z_0=k$,
             it has at least $k+1$ vertices.
                \end{proof}

\textbf{Remark.} The bound (4) is clearly not sharp and leaves a lot of room for  improvements.

\subsection{Bundles  with    $i$-effective simplices.}


 Existence of $i$-effective simplices does not come for free, it depends on the height of the first Stiefel-Whitney class. The ``size'' of the zero set $Z_i$  (if non-empty)
 now gets  smaller as $i$ grows. More precisely, we have:

\begin{theorem}\label{Thm_i-eff} Let $\Lambda: E \rightarrow  \partial \diamondsuit ^N$ be a nice coloring of an $n$-dimensional sphere bundle $\pi: E \rightarrow  B$.  Assume in addition that the height of the first Stiefel-Whitney class of the bundle  $$E\times_{\mathbb{Z}_2} \mathbb{R}^1\rightarrow E/\mathbb{Z}_2$$ is at least $n+i$, that is,

$$ 0\neq t_E^{n+i} \in H^*(E/\mathbb{Z}_2, \mathbb{Z}_2).$$ Then \begin{enumerate}
                \item  $Z_i$ is a  non-empty cycle representing a nontrivial homology class in  $H_{k-i}((E/\mathbb{Z}_2), \mathbb{Z}_2)$.
                \item $Z_i$ is a pure $(k-i)$-dimensional regular cell complex, whose cells are in bijection with $i$-effective simplices.
                 \item The number of $n+i$-dimensional $i$-effective simplices is at least $k+1-i$.
              \end{enumerate}
\end{theorem}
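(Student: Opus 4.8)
The plan is to establish the three parts in the order (2), then (1), then (3), since the regular cell structure underlies both the homological statement and the counting bound. Throughout I record that $Z_i=\Lambda^{-1}(e_{n+i-1})$ is $\mathbb{Z}_2$-invariant, because $e_{n+i-1}$ is a linear (hence centrally symmetric) subspace and $\Lambda$ is antipodal, so $Z_i$ descends to $\bar Z_i:=Z_i/\mathbb{Z}_2\subset E/\mathbb{Z}_2$. I will also use repeatedly the \emph{monotonicity} of the alternating number: if $\sigma$ is a face of $\tau$ then $Alt(\sigma)\leq Alt(\tau)$, since passing from $\sigma$ to $\tau$ only inserts extra labels into the sequence ordered by absolute value, and an insertion can never decrease the number of sign changes.

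For (2) I would repeat the reasoning of the proof of Theorem \ref{Thm_0-eff}(1). By the Lemma above, every $i$-effective simplex $\sigma$ contributes exactly one convex polytopal cell $Z_i(\sigma)$ of dimension $\dim\sigma-n-i$, while non-$i$-effective simplices contribute nothing; these cells are glued along the face poset inherited from $E$, so $Z_i$ is a regular (polytopal) complex whose cells are in bijection with the $i$-effective simplices. Purity is immediate from monotonicity: any $i$-effective $\sigma$ is a face of some top-dimensional simplex $\tau$ of $E$ (of dimension $n+k$), and $Alt(\tau)\geq Alt(\sigma)\geq n+i$ makes $\tau$ itself $i$-effective, so $Z_i(\sigma)$ is a face of the top cell $Z_i(\tau)$ of dimension $(n+k)-n-i=k-i$. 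Finally, a codimension-one cell $Z_i(\sigma)$ arises from an $i$-effective $\sigma$ of codimension one in $E$; as $E$ is a closed pseudomanifold, $\sigma$ lies in exactly two top-dimensional simplices, both $i$-effective by monotonicity, so $Z_i(\sigma)$ is a facet of exactly two top cells. Hence $Z_i$ is a pure $(k-i)$-dimensional pseudomanifold, and in particular a mod-$2$ cycle.

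Part (1) is where the hypothesis $t_E^{n+i}\neq 0$ is used, and I expect it to be the main obstacle. Passing to the quotient map $\bar\Lambda: E/\mathbb{Z}_2\to\partial\diamondsuit^N/\mathbb{Z}_2=\mathbb{R}P^{N-1}$, I would invoke the dictionary set up in Section \ref{SectMainTool}: the projective subspace determined by $e_{n+i-1}$ has codimension $n+i$ and is a general-position representative of the Poincare dual of $t_{\diamondsuit}^{n+i}$, so its transversal preimage $\bar Z_i=\bar\Lambda^{-1}(e_{n+i-1})$ represents the Poincare dual of the pullback $\bar\Lambda^\ast(t_{\diamondsuit}^{n+i})=t_E^{n+i}$. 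Since $E$ is a closed manifold with a free $\mathbb{Z}_2$-action, $E/\mathbb{Z}_2$ is a closed $(n+k)$-manifold and mod-$2$ Poincare duality is an isomorphism; as $t_E^{n+i}\in H^{n+i}(E/\mathbb{Z}_2;\mathbb{Z}_2)$ is nonzero by assumption, its dual class $[\bar Z_i]\in H_{(n+k)-(n+i)}(E/\mathbb{Z}_2;\mathbb{Z}_2)=H_{k-i}(E/\mathbb{Z}_2;\mathbb{Z}_2)$ is nonzero, whence $Z_i\neq\emptyset$. The delicate point is the piecewise-linear transversality needed to guarantee that the preimage of a general-position representative actually realizes the dual of the pulled-back class; this is exactly what the genericity conditions (1)--(3) on the planes $e_j$, together with the oriented-matroid analysis around Lemma \ref{lema:dep}, are designed to deliver, so I would quote that construction rather than reprove it.

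For (3) I would combine (1) and (2). By (1) the complex $Z_i$ is non-empty, and by (2) it is pure of dimension $k-i$, so it contains at least one top cell $Z_i(\tau)$ with $\tau$ a maximal $i$-effective simplex. By the Lemma, $Z_i(\tau)$ is a convex polytope of dimension $k-i$, hence has at least $(k-i)+1$ vertices. Each vertex lies in the relative interior of $Z_i(\sigma)$ for a unique face $\sigma\subseteq\tau$, and being $0$-dimensional it forces $\dim\sigma=n+i$ with $\sigma$ $i$-effective; distinct vertices give distinct faces. Thus $Z_i(\tau)$ already exhibits at least $k-i+1$ faces $\sigma$ of dimension $n+i$ with $Alt(\sigma)\geq n+i$, which is the desired bound of $k+1-i$ on the number of $(n+i)$-dimensional $i$-effective simplices.
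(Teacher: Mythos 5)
Your proof is correct and follows essentially the same route as the paper: the cells of $Z_i$ biject with $i$-effective simplices and glue into a pure $(k-i)$-dimensional pseudomanifold, nontriviality in part (1) comes from $Z_i$ representing the Poincar\'e dual of the pulled-back class $t_E^{n+i}\neq 0$, and the count in part (3) comes from a top cell being a $(k-i)$-dimensional convex polytope with at least $k-i+1$ vertices, each corresponding to an $(n+i)$-dimensional $i$-effective simplex. If anything, you supply details the paper's very terse proof leaves implicit, namely the descent of $Z_i$ to $E/\mathbb{Z}_2$, the monotonicity of $Alt$ under passing to cofaces, and essentially all of part (2).
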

              \begin{proof}
(1) $Z_i$ is non-trivial since it represents $t^i\neq 0$.
   (3) Indeed, $n+i$-dimensional $i$-effective simplices correspond to the vertices of $Z_i$. Since $\dim Z_i=k-i$,
             it has at least $k+1-i$ vertices.

                \end{proof}

\section{Concrete examples }\label{SectEx}

 We are going to use systematically the following principle:\textit{  if for a $S^n$-bundle $E\rightarrow B$ at least one of the classes $ W_{i,j}(w_1,...,w_{n+1})$  from Proposition \ref{PropCheck} is non-zero,
or, equivalently,  $t_E^{n+j+1}\neq 0$,  then  $N\geq n$,   and there are simplices with $Alt\geq n+j$.}

From here on we omit  $E$ from the subscript, setting $t:=t_E$.

\bigskip

\begin{enumerate}
  \item For a  nice coloring of an $S^n$-bundle with at least one non-zero  Stiefel-Whitney class $w_i$   ($i=1,2,...,n+1$), there are at least $k$ (pairs of)  $n+1$-dimensional simplices with $Alt\geq n+1$.
  \begin{proof}
    Relation (\ref{eqn:relation}) implies $t^{n+1}\neq 0$, and the statement follows by Theorem \ref{Thm_i-eff}.
  \end{proof}
  \item For a nice coloring of an oriented  $S^1$-bundles over  two-dimensional base $B=B^2$ with non-zero $w_2$   (that is, with an odd integer Euler class), there is always a simplex with $Alt=3$.

  \begin{proof} Proposition
   \ref{PropHomolGener}  implies that $t^3$ does not vanish since $w_1=0$, and therefore  $t^3=w_2t\neq 0$.
   \end{proof}

\noindent
   Question: What can we said about non-trivial oriented  $S^1$-bundles over  two-dimensional base $B=B^2$  (that is, with  $w_2=0$  but with an even integer Euler class)?
  \item For  a nice coloring of  an oriented $S^n$ bundle with  $w_2\neq 0$, there is always a simplex with $Alt\geq n+2$.
  \begin{proof}
    In this case $$t^{n+2}=(w_2+w_1^2)t^n+(w_3+w_1w_2)+...=w_2t^n+...\neq 0$$
  \end{proof}

  \item For  a nice coloring of an oriented $S^n$ bundle with  $w_3\neq 0$, there are at least $k-2$ simplices with $Alt\geq n+3$.

  \begin{proof}
     Indeed, in this case $$t^{n+3}=(w_3+w_1^3)t^n+(w_1w_3+w_2w_1^2w_2^2+w_4)t_E^{n-1}...= w_3t^n...\neq 0$$
  \end{proof}

\item Let $E$ be the total space of the   spherical bundle associated with the tangent bundle over $\mathbb{R}P^{4}$.  For any its nice triangulation there are simplices with $Alt=6$.

\begin{proof}
  It is known \cite{MilnorStashef}  that $w_1=a, \ w_4=a^4$, where $a$  is the generator of $H^1(\mathbb{R}P^{4})$, and the other classes vanish.
$$t^4=at^3+a^4,$$
$$t^5=a^2t^3+a^4t,$$
$$t^6=a^3t^3+a^4t^2,$$
but
$$t^7=a^4t^3+a^4t^3=0,$$
So there are at least $3$ simplices with $Alt=6$, but probably no simplices with  $Alt=7$.
\end{proof}

\item  For  any nice coloring of   the  tangent bundle of $\mathbb{R}P^{2^r }$,   there are at least three (pairs of)  simplices with $Alt\geq 2^{r+1}-2$.

\begin{proof}

Here we illustrate the efficiency of the formula from Proposition \ref{prop:Conn63}
by analysing the tangent bundle of $\mathbb{R}P^{n}$  in the case when  $n=2^r$ is an arbitrary power of $2$.

Following \cite[Section 4]{MilnorStashef}, the total Stiefel-Whitney class $w = w(\mathbb{R}P^{n})$ of the tangent bundle of $\mathbb{R}P^{n}$ is given by the formula

\[
   w(\mathbb{R}P^{n}) = (1+a)^{n+1} = \sum_{k=0}^{n} {{n+1}\choose{i}}a^n \, ,
\]
where $a\in H^1(\mathbb{R}P^{n})$ is the generator. If $n=2^r$ then $w = 1+a+a^{2^r}$ while the corresponding dual class is
\[
\bar{w} = 1 + a + a^2 +\dots + a^{2^r - 2} + a^{2^r-1}\, .
\]
An easy calculation shows that $W_{2,2^r-2}$ is non-zero. Indeed, in this case $i=2, j= {2^r-2}$ and the sum (\ref{eq:formula}) reduces to exactly one non-zero term corresponding to $\bar{w}_0 w_{2^r} = a^{2^r}$.

As an immediate consequence $t^{2^{r+1}-2}\neq 0$ and $Alt = 2^{r+1}-2$, which means that in this case the theoretical upper bound $2^r-1$ for the alternating number is almost attained.

\end{proof}

  \item For  any nice coloring of   the tangent bundle  of  $B=\mathbb{C}P^2$,   there are at least three  simplex with $Alt\geq 7$.

  Proof.  In our case $n=3$.

  Since $w_1=w_3=0$,
  we have  $t^4=w_2t^2+w_4$.
  One computes $$t^7=w_2^2t^3+w_4t^2$$  which is non-zero since $w_2^2 \neq 0$.

\end{enumerate}


\begin{thebibliography}{77777}

\bibitem{Alishahi-2015} M. Alishahi and H. Hajiabolhassan, On the chromatic number of general Kneser hypergraphs,
J. Combin. Theory Ser. B 115 (2015), 186--209.

\bibitem{matroids} A. Bj\" orner, M Las Vergnas, B. Sturmfels, N. White, G. Ziegler. \emph{Oriented Matroids}. Encyclopedia of Mathematics and Its Applications. Vol. 46 (2nd ed.). Cambridge University Press (1999).

\bibitem{Con63}  P.E. Conner. Diffeomorphisms of period two. Michigan Math. J., 10 (1963) 341--352.

\bibitem{Con67}  P.E. Conner. The bordism class of a bundle space. Michigan Math. J., 14 (1967) 289--303.

\bibitem{Bull2019} J.A. De Loera, X. Goaoc, F. Meunier, and N.H. Mustafa. The discrete yet ubiquitous
theorems of Carath\' eodory, Helly, Sperner, Tucker, and Tverberg. Bull. Amer. Math.
Soc. (N.S.), 56(3):415--511, 2019.

\bibitem{Dold} A. Dold. Parametrized Borsuk - Ulam theorems, Commentarii Mathematici
Helvetici 63 (1988), 275-285.


\bibitem{Ky Fan} K. Fan, A generalization of Tucker's combinatorial lemma with topological applications,
Annals Math., II. Ser. 56 (1952), 431--437.

\bibitem{Hatcher} A. Hatcher. Vector Bundles \& K-Theory.

\url{https://pi.math.cornell.edu/~hatcher/VBKT/VB.pdf}.

\bibitem{Husemoller} D. Husemoller, Fibre Bundles, McGraw-Hill, New York, 1966

\bibitem{Jawor} J. Jaworowski, A continuous version of the Borsuk-Ulam theorem. Proc. Amer.
Math. Soc. 82 (1981), 112-114.

\bibitem{Kaiser} T. Kaiser, M. Stehl\' ik. Fan's lemma via bistellar moves.  	arXiv:2308.07103 [math.CO].

\bibitem{Mark} M. de Longueville, A course in topological combinatorics, Universitext, Springer, New York,
2013.

\bibitem{mat08} J. Matou\v{s}ek.
\newblock Using the Borsuk-Ulam theorem: lectures on topological methods in combinatorics and geometry.
\newblock Springer, 2008.

\bibitem{Meunier-Su} F. Meunier  and F.E. Su. Multilabeled Versions of Sperner's and Fan's Lemmas and Applications, SIAM J. Appl. Algebra Geometry 2019,
Vol. 3, No. 3, pp. 391--411.


\bibitem{MilnorStashef}
J. Milnor, J. Stasheff, Characteristic Classes,
Princeton University Press, 1974.

\bibitem{Musin2015} O.R. Musin, Extensions of Sperner and Tucker's lemma for manifolds, J. Combin. Theory
Ser. A 132 (2015), 172--187.

\bibitem{Musin2017} O.R. Musin. KKM type theorems with boundary conditions, J. Fixed Point Theory Appl.
19 (2017), no. 3, 2037--2049.

\bibitem{Nakaoka} M. Nakaoka. Equivariant point theorems for fibre-preserving maps, Osaka Journal
of Mathematics 21 (1984), 809-815.

\bibitem{Rade-Combinatorica} R.T. \v Zivaljevi\' c. Oriented matroids and Ky Fan's theorem, Combinatorica 30 (4) (2010), 471--484.


\end{thebibliography}
\end{document}